\newtheorem{thm}{Theorem}[section]
\newtheorem{cor}{Corollary}[section]
\newtheorem{prop}{Proposition}[section]
\newcommand{\ser}{\mathbin{\bowtie}}
\newcommand{\pll}{\mathbin{\|}}
\journal{{\tt arXiv.org}}
\begin{document}

\begin{frontmatter}


\title{Chromatic roots at $2$ and at the Beraha number $B_{10}$}



\author{Daniel J. Harvey}
\ead{daniel.harvey87@gmail.com}
\author{Gordon F. Royle}
\ead{gordon.royle@uwa.edu.au}

\address{Department of Mathematics and Statistics, University of Western Australia}

\begin{abstract}
By the construction of suitable graphs and the determination of their chromatic polynomials, we resolve two open questions concerning real chromatic roots. First we exhibit graphs for which the Beraha number $B_{10} = (5 + \sqrt{5})/2$ is a chromatic root. As it was previously known that no other non-integer Beraha number is a chromatic root, this completes the determination of precisely which Beraha numbers can be chromatic roots. Next we construct an infinite family of $3$-connected graphs such that for any $k \geqslant 1$, there is a member of the family with $q=2$ as a chromatic root of multiplicity at least $k$. The former resolves a question of Salas and Sokal [J. Statist. Pys. 104 (2001) pp. 609--699] and the latter a question of Dong and Koh [J. Graph Theory 70 (2012) pp. 262--283].
\end{abstract}

\begin{keyword}
Chromatic Polynomial \sep Chromatic Root \sep Beraha Number


\end{keyword}

\end{frontmatter}


\section{Introduction}\label{intro}

Given a graph $G$, its \emph{chromatic polynomial} $P_G$ is the function defined by the property that $P_G(q)$ is the number of proper $q$-colourings of $G$ whenever $q$ is a non-negative integer. As its name suggests, $P_G$ is a polynomial in $q$ and so it is possible to evaluate the function at arbitrary real and complex values of $q$, regardless of whether these evaluations have any combinatorial significance. In particular it is possible to find the roots of the polynomial, either real or complex, and there is an extensive body of work relating graph-theoretical properties to the location of these \emph{chromatic roots}. The fundamental connection between complex chromatic roots and phase transitions in the $q$-state Potts model means that many of these results appear in the statistical physics literature. Sokal's \cite{MR2187739} survey paper is a comprehensive introduction and overview to this body of work. 

The fundamental results regarding which real numbers are chromatic roots (of any graph) can be summarised in a 
single sentence: There are no chromatic roots in the real intervals $(-\infty,0)$, $(0,1)$ or $(1,32/27]$ (Jackson \cite{MR1264037}), while chromatic roots are dense in the interval $(32/27,\infty)$ (Thomassen \cite{MR1483433}). There are more fine-grained results for specific classes of graphs, such as planar graphs, but we do not need these now.

The \emph{Beraha numbers} are the infinite sequence of numbers $\{B_n\}_{n=1}^{\infty}$ where
\begin{equation}
B_n = 2 + 2 \cos \left( \frac{2 \pi}{n} \right),
\end{equation}
with the sequence starting 
\[
B_1 = 4,\ 0,\ 1,\ 2, \varphi+1,\ 3,\  3.2469796,\ 2 + \sqrt{2},\ 3.5320889,\ \varphi+2,\ \ldots
\]
where the values for $B_7$ and $B_9$ are rounded to 8 significant figures, and $\varphi = (1+\sqrt{5})/2$ is the Golden Ratio. The sequence was first identified by Beraha in the course of studying chromatic roots of planar graphs, and cropped up again as  limit points of complex chromatic roots of the various lattices studied by statistical physicists. His conjecture that for each Beraha number $B_\ell$, there are planar triangulations with chromatic roots arbitrarily close to $B_\ell$ is still not resolved.

One of the more remarkable facts involving some of the Beraha numbers is Tutte's famous ``Golden Identity'' \cite{MR0272676} relating the values of the chromatic polynomial of a planar triangulation at $B_{5}$ and $B_{10}$. The Golden Identity asserts that if $T$ is a planar triangulation on $n$ vertices, then
\begin{equation}
P_T(\varphi+2) = (\varphi+2) \varphi^{3n-10} P_T(\varphi+1)^2,
\end{equation}
with the implication that for any planar triangulation $T$, either both or neither $B_5$ and $B_{10}$ are chromatic roots of $T$. In 1970, Tutte already knew that  $P_T(B_5) \not= 0$ for any planar triangulation $T$ and therefore that $P_T(B_{10})$ is not only non-zero, but strictly positive. Recently Perrett and Thomassen \cite{perrett_thomassen_2018} have shown that the same conclusion holds for all planar graphs, not just triangulations.  

Salas and Sokal \cite{MR1853428} observed that not only is $B_5$ not the chromatic root of any planar triangulation, but in fact it is not the chromatic
root of \emph{any graph} whatsoever. This follows because the minimal polynomial of $B_5$ is $q^2 - 3q +1$ and so any integer polynomial with $B_5 = (3 + \sqrt{5})/2$ as a root must also have its algebraic conjugate (the other root of the minimal polynomial) $B_5^* = (3 - \sqrt{5})/2$ as a root. As $B_5^* \approx 0.38196601$, this lies in the forbidden interval $(0,1)$ and so it follows that neither $B_5$ nor
$B_5^*$ are chromatic roots of any graph. In a similar fashion, they showed that no non-integer Beraha number is the chromatic root of any graph, with the possible exception of $B_{10} = \varphi+2$. The minimal polynomial of $B_{10}$ is $q^2 - 5 q + 5$ and so the algebraic conjugate $B_{10}^* \approx 1.381966011$ which is not \emph{a priori} forbidden.  They concluded that \emph{``The exceptional case $n=10$ is very curious''}.

In \cref{beraha} we resolve this exceptional case by exhibiting graphs with $B_{10}$ as a chromatic root, thereby completing the determination of exactly which Beraha numbers can be chromatic roots.

\begin{prop}
The only Beraha numbers that are chromatic roots are the integer Beraha numbers $\{B_1, B_2, B_3, B_4, B_6\}$ and $B_{10}$.
\end{prop}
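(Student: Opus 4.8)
The plan is to prove the two directions of this characterization separately: that each of the six listed Beraha numbers is indeed a chromatic root, and that no other Beraha number is. The heavy lifting for the ``only if'' direction has already been supplied by the results quoted in the introduction, so the bulk of the work here is organizational.

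First I would pin down exactly which Beraha numbers are integers. Since $B_n = 2 + 2\cos(2\pi/n)$, the value $B_n$ is an integer precisely when $\cos(2\pi/n) \in \{0, \pm\tfrac12, \pm 1\}$; by Niven's theorem these are the only rational values taken by the cosine of a rational multiple of $\pi$, so this occurs exactly for $n \in \{1,2,3,4,6\}$, yielding $B_1 = 4$, $B_2 = 0$, $B_3 = 1$, $B_4 = 2$, and $B_6 = 3$. This single computation both identifies the integer Beraha numbers and confirms that no integer Beraha number lies outside the listed set, disposing of half of the ``only if'' direction at once.

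Next I would verify that each of these six numbers really is a chromatic root. The number $0 = B_2$ is a root of $P_G$ for every graph $G$ with at least one vertex. For each positive integer $k$ the complete graph $K_{k+1}$ has $P_{K_{k+1}}(q) = q(q-1)\cdots(q-k)$, so $k$ is a chromatic root; taking $k \in \{1,2,3,4\}$ covers $B_3, B_4, B_6$, and $B_1$. Finally $B_{10}$ is a chromatic root by \cref{beraha}, which exhibits an explicit graph with $B_{10}$ in its chromatic spectrum.

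It remains to rule out all the non-integer Beraha numbers other than $B_{10}$, and for this I would invoke the result of Salas and Sokal \cite{MR1853428}. Their argument rests on the fact that a chromatic polynomial has integer coefficients, so every Galois conjugate of a chromatic root must itself be a chromatic root; the conjugates of $B_n$ are exactly the numbers $2 + 2\cos(2\pi k/n)$ with $\gcd(k,n)=1$, and for each non-integer $B_n$ with $n \neq 10$ one of these conjugates falls in a forbidden interval $(0,1)$ or $(1,32/27]$ (Jackson), forcing $B_n$ not to be a chromatic root. The case $n=10$ escapes this argument precisely because the conjugate $B_{10}^* \approx 1.381966$ exceeds $32/27$ and so is not \emph{a priori} excluded. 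Consequently the only genuinely substantive ingredient in the whole statement is the existence claim for $B_{10}$, which is exactly the novel construction deferred to \cref{beraha}; every other step is a matter of assembling standard facts, and that deferred construction is where I expect the real difficulty to lie.
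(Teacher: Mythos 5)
Your proposal is correct and follows essentially the same route as the paper: it combines Jackson's forbidden intervals with the Salas--Sokal Galois-conjugate argument to exclude all non-integer Beraha numbers other than $B_{10}$, notes the integer cases are trivially realized, and defers the one substantive point --- the existence of a graph with $B_{10}$ as a chromatic root --- to the explicit constructions of \cref{beraha}, which is exactly how the paper assembles the result. The only additions (Niven's theorem to identify the integer $B_n$, and complete graphs realizing the integer roots) are details the paper treats as known, so no meaningful divergence.
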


The second part of this paper is concerned with the multiplicity of the integer $2$ as a chromatic root. All expository introductions to the theory of chromatic polynomials make the following observations:
\begin{itemize}
\setlength{\itemsep}{0pt}
\item If $G$ is connected, then $q=0$ is a simple root of $P_G(q)$.
\item If $G$ is $2$-connected, then $q=1$ is a simple root of $P_G(q)$.
\end{itemize}
However the analagous statement for $3$-connected graphs is just not true --- there are $3$-connected graphs with either no chromatic roots at $q=2$ (bipartite graphs) or a double chromatic root at $q=2$ (Dong \\ Koh \cite{MR2946075}). Despite this, there are specific families of graphs (for example, planar triangulations) whose $3$-connected members are guaranteed to have a simple chromatic root at $q=2$. This suggests that it may be possible to recover a statement of the form
\begin{itemize}
\item If $G$ is a $3$-connected graph and has
<<\emph{some graphical property}>>, then $q=2$ is a simple root of $P_G(q)$.  
\end{itemize}
As a bipartite graph does not have $q=2$ as a chromatic root at all, the graphical property should either explicitly or (preferably) implicitly exclude bipartite graphs. 

Dong and Koh \cite{MR2946075} addressed this question, finding an infinite family $\mathcal{J}$ of $3$-connected graphs such that $(q-2)^2 \mid P_G(q)$ for all $G \in \mathcal{J}$. They exhibited a $13$-vertex graph $H$ (attributed to Jackson) which has a double chromatic root at $q=2$ (see \cref{herschel}). They observed that the clique sum (aka ``gluing'') of $k$ copies of $H$ across its unique triangle yields a graph $H_k$ that has $2$ as a chromatic root multiplicity $k+1$. However, in the study of chromatic polynomials, graphs obtained by clique sums are in some sense trivial or uninteresting cases. A graph is the clique sum of two smaller graphs if and only if it has a \emph{complete cutset}. Therefore Dong and Koh asked whether it was possible to find a $3$-connected graph $G$, without a complete cutset, such that $(q-2)^3 \mid P_G(q)$, and, more generally, with a chromatic root at $2$ of arbitrarily high multiplicity.

\begin{figure}
\begin{center}
\begin{tikzpicture}
\tikzstyle{vertex}=[circle, draw=black, fill=black!25!white, inner sep = 0.55mm]
\node [vertex] (v0) at (0,0) {};
\node [vertex] (v1) at (0,4) {};
\node [vertex] (v2) at (1,2) {};
\node [vertex] (v3) at (1,3) {};
\node [vertex] (v4) at (2,1) {};
\node [vertex] (v5) at (2,2) {};
\node [vertex] (v6) at (2,3) {};
\node [vertex] (v7) at (3,1) {};
\node [vertex] (v8) at (3,2) {};
\node [vertex] (v9) at (4,4) {};
\node [vertex] (va) at (3.5,0) {};
\node [vertex] (vb) at (4,0.5) {};
\node [vertex] (vc) at (3.5,0.5) {};
\draw (v0)--(v1);
\draw (v0)--(v2);
\draw (v0)--(v4);
\draw (v1)--(v3);
\draw (v1)--(v9);
\draw (v2)--(v3);
\draw (v2)--(v5);
\draw (v4)--(v5);
\draw (v4)--(v7);
\draw (v3)--(v6);
\draw (v5)--(v6);
\draw (v6)--(v9);
\draw (v7)--(v8);
\draw (v0)--(va);
\draw (v7)--(vc);
\draw (v9)--(vb);
\draw (v8)--(v9);
\draw (v5)--(v8);
\draw (va)--(vb)--(vc)--(va);
\end{tikzpicture}
\caption{A graph $H$ such that $(q-2)^2 \mid P_H(q)$}
\label{herschel}
\end{center}
\end{figure}
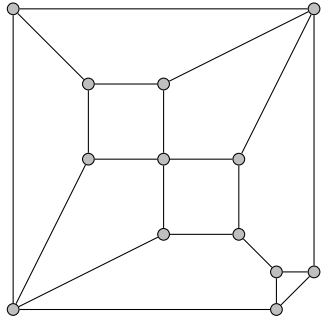

In \cref{multiroot} we resolve their question by constructing a family of graphs such that for all $k \geqslant 3$, there is a unique member of this family that is $3$-connected, has no complete cutset, and has $q=2$ as a chromatic root of multiplicity at least $k$. The family is based on replacing the rim vertices in the $k$-spoke wheel $W_k$ with copies of $K_{3,3}$ in a particular way. This yields a \emph{recursive family} of graphs in the sense of Biggs, Damerell and Sands \cite{MR0294172}. Using techniques developed by  Noy and Rib\'o \cite{MR2037635}, we find the linear recurrence satisfied by the chromatic polynomials of this family, and use this to show that $(q-2)^k$ divides the chromatic polynomial of the $k$-th member of the family.

\begin{prop}
For all natural numbers $k$, there is a $3$-connected graph $G$ such that $(q-2)^k \mid P_G(q)$.  
\end{prop}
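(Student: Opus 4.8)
The plan is to realize the construction sketched in the introduction and then extract divisibility by $(q-2)^k$ from the recurrence governing the family. First I would define the family precisely: take the $k$-spoke wheel $W_k$ (a hub joined to a cycle $C_k$ of rim vertices) and replace each rim vertex by a copy of $K_{3,3}$, attaching the hub and the two cyclically adjacent ``gadgets'' to prescribed vertices of each $K_{3,3}$ so that the result is $3$-connected with no complete cutset and is invariant under the cyclic $\mathbb{Z}_k$ rotation. Because every gadget is glued identically, this is a \emph{recursive family} in the sense of Biggs, Damerell and Sands \cite{MR0294172}: the chromatic polynomials $P_k(q)$ are governed by a transfer matrix $T(q)$ acting on a finite-dimensional space indexed by the colour-partition ``states'' of the boundary vertices that a single gadget presents to its neighbours. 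Following Noy and Rib\'o \cite{MR2037635}, I would compute the transfer matrix for one gadget explicitly and then express $P_k(q)=\operatorname{tr}\bigl(M(q)\,T(q)^k\bigr)$ (up to normalisation), where $M(q)$ accounts for the hub and the wrap-around identification, so that $P_k(q)$ is a fixed integer-combination of the $k$-th powers of the eigenvalues of $T(q)$.

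The key observation that forces high multiplicity at $q=2$ is that $P_k(q)$ has the shape $\sum_i c_i(q)\,\lambda_i(q)^k$, where the $\lambda_i(q)$ are the eigenvalues of $T(q)$ and the $c_i(q)$ are fixed rational functions independent of $k$. I would specialise to $q=2$ and argue that, because each gadget $K_{3,3}$ is bipartite and the gluing is chosen compatibly, one distinguished eigenvalue $\lambda_1(q)$ vanishes to high order at $q=2$ — concretely, $\lambda_1(2)=0$ — while the accompanying coefficient is nonzero. Then each summand $c_i(q)\lambda_i(q)^k$ with $\lambda_i(2)\neq0$ must be shown to contribute nothing obstructive, and the term $c_1(q)\lambda_1(q)^k$ contributes a factor of $(q-2)^k$ since $\lambda_1(q)$ carries a simple zero at $q=2$. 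The cleanest way to package this is to verify that $(q-2)^k \mid P_k(q)$ by tracking the order of vanishing at $q=2$ of the full expression $\operatorname{tr}(M(q)T(q)^k)$: I would show that the eigenvalues of $T(q)$ that do not vanish at $q=2$ either cancel in the trace or combine with coefficients that themselves vanish to the required order, leaving the net order of vanishing at least $k$.

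In practice the rigorous route is to establish a single linear recurrence of fixed order $d$ (the size of the state space) satisfied by the sequence $\{P_k(q)\}_k$, namely $\sum_{j=0}^{d} a_j(q)\,P_{k-j}(q)=0$ with polynomial coefficients $a_j(q)$, and then prove by induction on $k$ that $(q-2)^k\mid P_k(q)$. For the induction step I would reduce the recurrence modulo a suitable power of $(q-2)$, using the base cases (small $k$, including the Dong--Koh double root and its analogues) together with the fact that the characteristic polynomial of the recurrence has $q=2$ as a root of the relevant eigenvalue factor. The most delicate point — and the main obstacle — is precisely this eigenvalue analysis at $q=2$: I must identify the right gadget and gluing scheme so that the transfer matrix has an eigenvalue vanishing at $q=2$ with exactly the multiplicity behaviour that yields one extra factor of $(q-2)$ per added gadget, and I must confirm that the corresponding coefficient does not itself vanish at $q=2$ (otherwise the multiplicity could be higher or the argument could stall). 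Verifying $3$-connectivity and the absence of a complete cutset is comparatively routine, following from the high connectivity of each $K_{3,3}$ gadget and the cyclic structure, so I would relegate that to a short direct check.
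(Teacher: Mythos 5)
Your overall plan---the wheel-with-$K_{3,3}$-gadgets family, the Noy--Rib\'o transfer-matrix bookkeeping, and a final induction on a fixed-order linear recurrence---is the same route the paper takes (see \cref{thm:recur} and \cref{cor:recur}; the paper encodes the wrap-around by deleting one rim edge and writing $P_{X(k)}=\boldsymbol{P}'(1)\,T^{k-1}\,\boldsymbol{v}$ as a bilinear form over partition-indexed partial chromatic polynomials, rather than a trace, but that is cosmetic). However, there is a genuine gap at exactly the point you yourself call ``the main obstacle,'' and the mechanism you propose for closing it is structurally wrong. You posit that \emph{one} distinguished eigenvalue $\lambda_1(q)$ of the transfer matrix vanishes at $q=2$ with nonvanishing coefficient, and that the remaining terms ``contribute nothing obstructive.'' In a decomposition $P_k=\sum_i c_i(q)\lambda_i(q)^k$ with a fixed finite number of terms, any term whose eigenvalue satisfies $\lambda_i(2)\neq 0$ has order of vanishing at $q=2$ equal to the fixed order of $c_i$, independent of $k$; finitely many such terms cannot conspire to cancel to order at least $k$ for every $k$. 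So if any eigenvalue appearing nontrivially fails to vanish at $q=2$, the multiplicity cannot grow linearly in $k$, and your picture collapses.

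What is actually true for this family---and what the paper establishes by explicitly computing the $5\times 5$ transfer matrix $T$ and verifying $T^3-A(q)(q-2)T^2-B(q)(q-2)^2T-C(q)(q-2)^3=0$---is that \emph{every} relevant eigenvalue carries a factor of $(q-2)$: substituting $\lambda=(q-2)\mu$ turns the characteristic cubic into $\mu^3-A(q)\mu^2-B(q)\mu-C(q)=0$, whose roots are regular at $q=2$, so each $\lambda_i$ vanishes there to order at least $1$. Equivalently, the $j$-th coefficient of the order-$3$ recurrence is divisible by $(q-2)^j$, and it is precisely this graded divisibility that makes the induction work: each term $a_j(q)P_{k-j}(q)$ is divisible by $(q-2)^j\cdot(q-2)^{k-j}=(q-2)^k$. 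Your substitute fact---that ``the characteristic polynomial of the recurrence has $q=2$ as a root''---only gives vanishing of the constant coefficient at $q=2$ and the induction stalls immediately if, say, $a_1(2)\neq 0$. Moreover, the paper obtains this divisibility pattern by direct computation of $T$ and of the recurrence; there is no soft argument (e.g., from the bipartiteness of $K_{3,3}$) that substitutes for it, so this step cannot be waved through. A minor further point: the base cases of the induction should be $X(1)$, $X(2)$, $X(3)$ themselves, whose chromatic polynomials carry factors $(q-2)$, $(q-2)^2$, $(q-2)^3$; the Dong--Koh graph is not a member of the family and plays no role in the recurrence.
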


In an Appendix, we give some {\tt SageMath} code to construct the family of graphs and compute their chromatic polynomials according to the linear recurrence; this is to assist any reader wishing to verify our claims.


\section{Graphs with $B_{10}$ as a chromatic root}
\label{beraha}

Two graphs with $B_{10}$ as a chromatic root are shown in Figure~\ref{smallest}. Each example is obtained by replacing a single edge of a suitable complete graph (either $K_6$ or $K_5$) with a small ``gadget''. Each of the gadgets is a small $2$-terminal series-parallel graph (see  Royle and Sokal \cite{MR3416855} for an extensive discussion of $2$-terminal series-parallel graphs). The factored chromatic polynomials of these graphs are 

{\small 
\begin{align*}
P_{G_1}(q) &= q (q-1) (q-2) (q-3)^2 (q-4) \underbrace{\left(q^2-5 q+5\right)}_{\text{Min. pol of $B_{10}$}} \left(q^3-4 q^2+8 q-7\right)\\
P_{G_2}(q) &= q (q-1) (q-2) (q-3)  \underbrace{\left(q^2-5 q+5\right)}_{\text{Min. pol of $B_{10}$}}\left(q^5-8 q^4+30 q^3-63 q^2+73 q-36\right),
\end{align*}}
with the factor $q^2-5q+5$ highlighted.

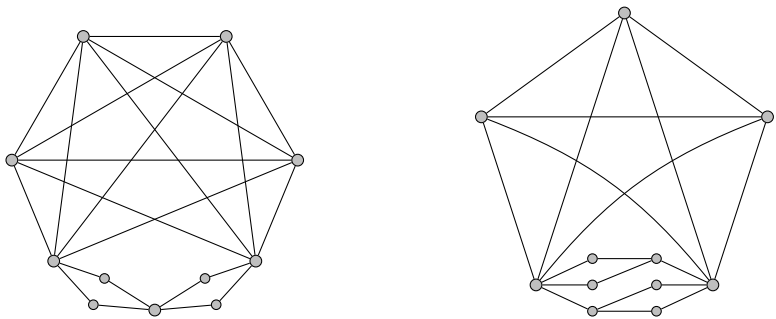
\begin{figure}
\begin{center}
\begin{tikzpicture}[rotate=90,scale=0.95]
\tikzstyle{vertex}=[circle, draw=black, fill=black!25!white, inner sep = 0.55mm]
\tikzstyle{svertex}=[circle, draw=black, fill=black!25!white, inner sep = 0.45mm]
\node [vertex] (v0) at (30:2cm) {};
\node [vertex] (v1) at (90:2cm) {};
\node [vertex] (v2) at (135:2cm) {};
\node [vertex] (v3) at (225:2cm) {};
\node [vertex] (v4) at (270:2cm) {};
\node [vertex] (v5) at (330:2cm) {};

\node [vertex] (v6) at (180:2.1cm) {};

\node [svertex] (v7) at (157:1.8cm) {};
\node [svertex] (v8) at (157:2.2cm) {};

\node [svertex] (v9) at (203:1.8cm) {};
\node [svertex] (v10) at (203:2.2cm) {};

\draw (v0)--(v1)--(v2)--(v0);
\draw (v0)--(v3);
\draw (v0)--(v4);
\draw (v0)--(v5);
\draw (v3)--(v4)--(v5)--(v3);
\draw (v1)--(v3);
\draw (v1)--(v4);
\draw (v1)--(v5);
\draw (v2)--(v4);
\draw (v2)--(v5);

\draw (v2)--(v7)--(v6)--(v8)--(v2);
\draw (v3)--(v9)--(v6)--(v10)--(v3);

\end{tikzpicture}
\hspace{2cm}
\begin{tikzpicture}
\tikzstyle{vertex}=[circle, draw=black, fill=black!25!white, inner sep = 0.55mm]
\tikzstyle{svertex}=[circle, draw=black, fill=black!25!white, inner sep = 0.45mm]
\node [vertex] (v2) at (90:2cm) {};
\node [vertex] (v3) at (162:2cm) {};
\node [vertex] (v4) at (234:2cm) {};
\node [vertex] (v0) at (306:2cm) {};
\node [vertex] (v1) at (18:2cm) {};

\node [svertex] (v5) at ($(v4) + (0.75,0.35)$) {};
\node [svertex] (v6) at ($(v4) + (0.75,0)$) {};
\node [svertex] (v7) at ($(v4) + (0.75,-0.35)$) {};

\node [svertex] (v8) at ($(v0) - (0.75,0.35)$) {};
\node [svertex] (v9) at ($(v0) - (0.75,0)$) {};
\node [svertex] (v10) at ($(v0) - (0.75,-0.35)$) {};

\draw (v0)--(v1);
\draw (v0)--(v2);
\draw [bend left = 15] (v4) to (v1);
\draw [bend left = 15] (v3) to (v0);
\draw (v4)--(v2);
\draw (v4)--(v3);

\draw (v1)--(v2)--(v3)--(v1);

\draw (v4)--(v5);
\draw (v4)--(v6);
\draw (v4)--(v7);

\draw (v0)--(v8);
\draw (v0)--(v9);
\draw (v0)--(v10);

\draw (v10)--(v5);
\draw (v10)--(v6);

\draw (v7)--(v8);
\draw (v7)--(v9);

\end{tikzpicture}
\end{center} 
\caption{Two graphs $G_1$ (left) and $G_2$ (right) with $B_{10}$ as a chromatic root}
\label{smallest}
\end{figure}

Of course it is easy to verify these chromatic polynomials directly by computer, but it is nonetheless interesting to 
investigate the structure of these examples and derive the result directly and in a form that can easily be
verified without using chromatic polynomial software. This is possible due to the particular convenience of chromatic polynomial
calculations (in fact, even Tutte polynomial calculations) in $2$-terminal graphs (and particularly $2$-terminal series-parallel graphs). All that is required is to maintain some auxiliary information regarding how the terminals are coloured and this information can then be propagated through the operations of series and parallel connections. 

As in Royle and Sokal \cite{MR3416855}, we use $G \ser H$ to denote the series connection of $G$ and $H$ and  $G \pll H$ for the parallel connection. We express the chromatic polynomial of a $2$-terminal graph as the sum of the $q$-colourings that colour the terminals with the {\em same} colour and those that colour the terminals with {\em different} colours. So for a 2-terminal graph $G$ with terminals $s$, $t$, we let $S_G(q)$ denote the number of $q$-colourings of $G$ where $s$ and $t$ are coloured the {\bf S}ame, and $D_G(q)$ the number of $q$-colourings of $G$ where they are coloured {\bf D}ifferently.  Then it is clear that $P_G(q) = S_G(q) + D_G(q)$ and that
\begin{align*}
S_G(q) &= P_{G/st}(q), \\
D_G(q) &= P_{G+st}(q), 
\end{align*}
where $G/st$ denotes the graph obtained by \emph{merging} $s$ and $t$ (creating a loop if $s$ and $t$ are adjacent), and $G+st$ the graph obtained by \emph{joining} $s$ and $t$. 

So now suppose that $G$ and $H$ are 2-terminal graphs and that we know $S_G$, $S_H$, $D_G$ and $D_H$. Then we can determine the  same values for the parallel connection $G \pll H$ and the series connection $G \ser H$ as follows: 
\begin{align*}
S_{G \| H} (q) &= \frac{S_G(q) S_H(q)}{q}, \\
D_{G \| H} (q) &= \frac{D_G(q) D_H(q)}{q(q-1)},\\
S_{G \ser H} (q) &= \frac{S_G (q) S_H(q)}{q} + \frac{D_G(q) D_H(q) }{ q(q-1) }, \\
D_{G \ser H} (q) &= \frac{(q-2) D_G(q) D_H(q)}{q(q-1)} + \frac{D_G(q) S_H(q)}{q} + \frac{S_G(q) D_H(q)}{q} .
\end{align*}
These expressions are calculated by taking the Cartesian product of the $q$-colourings of $G$ and $H$, dividing by some function of $q$ to ensure that we count only those pairs of colourings that match on the common vertices, and then assigning the resulting colourings to either $S$ or $D$ depending on whether the new terminals are coloured the same or different.

Now we can construct the graph $G_1$ in the following fashion:
\begin{align*}
W & =  (K_2 \ser K_2) \| (K_2 \ser K_2), \\
G_1 &= (K_6 \backslash e) \pll   (W \ser W),
\end{align*}
where $W$ is the $4$-cycle $C_4$ with nonadjacent vertices as terminals and $K_6 \backslash e$ is viewed as a 2-terminal graph by declaring the former end points of $e$ to be the two terminals.

\newcommand\ff[1]{(q)_#1}

So if we let $p_G(q) = [S_G(q), D_G(q)]$ denote the ``partitioned chromatic polynomial'' of a 2-terminal graph $G$, and let $\ff{n} = q(q-1)\cdots(q-n+1)$ denote the falling factorial (the chromatic polynomial of $K_n$), then we have
{\small
\begin{align*}
p_{K_2}(q) &=[ 0, \ff{2} ], \\
p_{K_2 \ser K_2} (q) &= [\ff{2}, \ff{3}], \\
p_{W} (q) &= [ \ff{2} (q-1), \ff{3} (q-2)] ,  \\
p_{W \ser W}  (q) &= [\ff{2} \left(q^4-7 q^3+21 q^2-29 q+15\right), \ff{3} (q-2)  \left(q^3-4 q^2+8 q-6\right)], \\
p_{K_6\backslash e}(q) &= [\ff{5}, \ff{6}]. 
\end{align*}}
Finally, combining these last two expressions in the appropriate fashion yields the chromatic polynomial given above. A similar but slightly more complicated derivation yields the chromatic polynomial of $G_2$.

\section{Graphs with a multiple chromatic root at $q=2$}
\label{multiroot}

As outlined in \cref{intro},
Dong and Koh \cite{MR2946075} ask whether there are $3$-connected graphs without complete cutsets that have a chromatic root at $q=2$ of multiplicity greater than $2$. In this section we completely answer their question by exhibiting a family $\{X(k)\}_{k \geqslant 1}$ of such graphs with the property that $(q-2)^k \mid P_{X(k)}(q)$ for all $k \geqslant 1$.



 

  






The family is constructed by performing a certain \emph{vertex-replacement operation} on all the vertices of degree $3$ in a particular starting graph. This operation is best viewed as two steps. In the first step, the vertex of degree $3$ is replaced by a triangle (this is also known as a \emph{truncation} or as $Y$-$\Delta$ operation). In the second step, each edge of triangle is subdivided and an edge added between the new vertex and the single vertex of the triangle to which it is not already adjacent. The two steps together have the effect of replacing each vertex by a $6$-vertex subgraph isomorphic to $K_{3,3}$. 

The graph $X(k)$ is obtained from the $k$-spoke wheel by replacing each of the $k$ rim vertices with a copy of $K_{3,3}$ in the manner specified above. The third graph in Figure~\ref{fig:x1x2} shows $X(3)$.

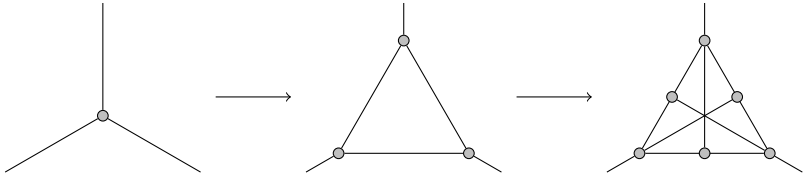
\begin{figure}
\begin{center}
\begin{tikzpicture}
\tikzstyle{vertex}=[circle,fill=lightgray,draw=black ,inner sep = 0.5mm]
\node [vertex] (c) at (0,0) {};
\foreach \x in {0,1,2} {
\coordinate (c\x) at (90+120*\x:1.5cm);
\draw (c)--(c\x);
}
\draw [->] (1.5,0.25)--(2.5,0.25);
\pgftransformxshift{4cm}
\foreach \x in {0,1,2} {
\node [vertex] (l\x) at (90+120*\x:1cm) {};
\coordinate (c\x) at (90+120*\x:1.5cm);
\draw (c\x)--(l\x);
}
\draw (l0)--(l1)--(l2)--(l0);
\draw [->] (1.5,0.25)--(2.5,0.25);
\pgftransformxshift{4cm}
\foreach \x in {0,1,2} {
\node [vertex] (l\x) at (90+120*\x:1cm) {};
\coordinate (c\x) at (90+120*\x:1.5cm);
\draw (c\x)--(l\x);
}
\draw (l0)--(l1)--(l2)--(l0);
\node [vertex] (a) at ($(l0)!0.5!(l1)$) {};
\node [vertex] (b) at ($(l1)!0.5!(l2)$) {};
\node [vertex] (c) at ($(l2)!0.5!(l0)$) {};
\draw (a)--(l2);
\draw (b)--(l0);
\draw (c)--(l1);
\end{tikzpicture}
\end{center}
\caption{Replacing a vertex by a copy of $K_{3,3}$}
\label{k33replacement}
\end{figure}

To obtain the first two graphs in this family, namely $X(1)$ and $X(2)$, we must define what a $k$-spoke wheel should be when $k \in \{1,2\}$. If we take the $1$-spoke wheel to be a ``lollipop'', namely a single edge with a loop on one end-vertex, and a $2$-spoke wheel to be the graph obtained by adding a double-edge between the end-vertices of a $3$-vertex path, then the construction can be completed unambiguously and yields a well-defined simple graph in each case. Figure~\ref{fig:x1x2} shows the first three members of the family.

\begin{figure}
\begin{center}
\begin{tikzpicture}[scale = 0.8]
\tikzstyle{vertex}=[circle,fill=lightgray,draw=black ,inner sep = 0.5mm]
\node [vertex] (c) at (0,0) {};
\foreach \x in {1} {
\node [vertex] (l\x) at (15+60*\x:2.5cm) {};
\node [vertex] (m\x) at (30+60*\x:1.25cm) {};
\node [vertex] (r\x) at (45+60*\x:2.5cm) {};
\node [vertex] (a\x) at ($(l\x)!0.5!(r\x)$) {};
\node [vertex] (b\x) at ($(l\x)!0.5!(m\x)$) {};
\node [vertex] (c\x) at ($(r\x)!0.5!(m\x)$) {};

\draw (l\x)--(a\x)--(r\x);
\draw (l\x)--(b\x)--(m\x);
\draw (r\x)--(c\x)--(m\x);

\draw (a\x)--(m\x);
\draw (b\x)--(r\x);
\draw (c\x)--(l\x);

\draw (c)--(m\x);
}

\draw (r1) .. controls (225:2.5cm) and (315:2.5cm) .. (l1);


\pgftransformxshift{3.75cm}
\pgftransformyshift{1cm}

\node [vertex] (c) at (0,0) {};
\foreach \x in {1,4} {
\node [vertex] (l\x) at (15+60*\x:2.5cm) {};
\node [vertex] (m\x) at (30+60*\x:1.25cm) {};
\node [vertex] (r\x) at (45+60*\x:2.5cm) {};
\node [vertex] (a\x) at ($(l\x)!0.5!(r\x)$) {};
\node [vertex] (b\x) at ($(l\x)!0.5!(m\x)$) {};
\node [vertex] (c\x) at ($(r\x)!0.5!(m\x)$) {};

\draw (l\x)--(a\x)--(r\x);
\draw (l\x)--(b\x)--(m\x);
\draw (r\x)--(c\x)--(m\x);

\draw (a\x)--(m\x);
\draw (b\x)--(r\x);
\draw (c\x)--(l\x);

\draw [black] (c)--(m\x);
}

\draw [bend left = 30] (l1) to (r4);
\draw [bend left = 30] (l4) to (r1);

\pgftransformxshift{5cm}
\pgftransformyshift{-0.25cm}

\node [vertex] (c) at (0,0) {};
\foreach \x in {1,3,5} {
\node [vertex] (l\x) at (15+60*\x:2.5cm) {};
\node [vertex] (m\x) at (30+60*\x:1.25cm) {};
\node [vertex] (r\x) at (45+60*\x:2.5cm) {};
\node [vertex] (a\x) at ($(l\x)!0.5!(r\x)$) {};
\node [vertex] (b\x) at ($(l\x)!0.5!(m\x)$) {};
\node [vertex] (c\x) at ($(r\x)!0.5!(m\x)$) {};

\draw (l\x)--(a\x)--(r\x);
\draw (l\x)--(b\x)--(m\x);
\draw (r\x)--(c\x)--(m\x);

\draw (a\x)--(m\x);
\draw (b\x)--(r\x);
\draw (c\x)--(l\x);

\draw (c)--(m\x);
}

\draw [bend left = 15] (l3) to (r1);
\draw [bend left = 15] (l5) to (r3);
\draw [bend left = 15] (l1) to (r5);

\end{tikzpicture}
\end{center}
\caption{The graphs $X(1)$, $X(2)$ and $X(3)$}
\label{fig:x1x2}
\end{figure}
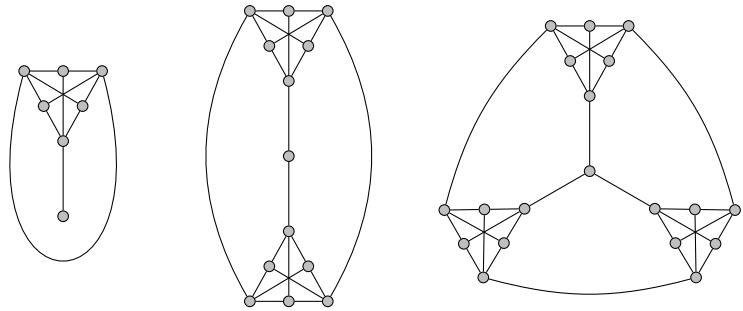







This family is a \emph{recursive family} of graphs (in the sense of Biggs, Damerell and Sands \cite{MR0294172}) which means that their Tutte polynomials $T_{X(k)}(x,y)$ satisfy a linear recurrence of the form
\[
T_{X(k+p)} + \alpha_1 T_{X(k+p-1)} + \alpha_2 T_{X(k+p-2)} + \cdots + \alpha_p T_{X(k)} = 0,
\]
where $p$ is a fixed integer, and $\alpha_i$ is a fixed polynomial in $x$ and $y$. The \emph{rank polynomial}, \emph{chromatic polynomial} and indeed any specialisation of the Tutte polynomial will satisfy an analogous linear recurrence. 

In the course of investigating which families of graphs are recursive, Noy and Rib\'o \cite{MR2037635} described a very general construction technique to produce what they called \emph{recursively constructible families} of graphs. They showed that any recursively constructible family of graphs is recursive, and conjectured the converse. The construction of the $\{X(k)\}_{k \geqslant 1}$ fits into their framework, and so is recursively constructible and therefore recursive.  In this same paper, they illustrated how to determine the linear recurrence satisfied by the rank polynomial of a recursively constructible family of graphs. 

In the current paper, we are interested only in the chromatic polynomial rather than the rank (or equivalently Tutte) polynomial. Rather than finding an expression for the rank polynomial and then specialising it to the chromatic polynomial, we exploit this fact and aim directly for the chromatic polynomial. This results in substantially smaller expressions to manipulate, and also some simplification of the overall process. However we are essentially specializing the procedure outlined by Noy and Rib\'o \cite{MR2037635} from the rank polynomial to the chromatic polynomial.

We first state our results and their consequences before devoting the remainder of this section to their proof.

\begin{thm}
\label{thm:recur}
The sequence of chromatic polynomials $P_k(q) = P_{X(k)}(q)$ of the family of graphs $\{X(k)\}_{k \geqslant 1}$ defined above satisfies the linear recurrence:
\[
P_{k+3}(q) = A(q) (q-2) P_{k+2}(q) + B(q) (q-2)^2 P_{k+1}(q) + C(q) (q-2)^3 P_k(q)
\]
where
\begin{align*}
A(q)  &= \left(q^5-9 q^4+37 q^3-88 q^2+127 q-94\right), \\
B(q)  &= \left(6 q^7-81 q^6+497 q^5-1781 q^4+4026 q^3-5780 q^2+4968 q-2024\right),
\end{align*}
\text{ and }
\begin{multline*}
C(q) = (q-1)^2  \left(3 q^2-15 q+25\right) \\ \left(3 q^5-33 q^4+157 q^3-399 q^2+535 q-299\right).
\end{multline*}
\noindent
The base cases for this recurrence are
\begin{eqnarray*}
P_1(q) & = & q (q-1)^2 (q-2) ( q^3-7 q^2+19 q-19 ),\\ 
P_2(q) & = & q (q-1) (q-2)^2 (q^9-17 q^8+138 q^7-692 q^6 \\  & & {} + 2353 q^5-5630 q^4+9525 q^3-11086 q^2+8152 q-2913),\\
P_3(q) & = &q (q-1) (q-2)^3 (q^{14}-26 q^{13}+328 q^{12}-2645 q^{11}+ 15181 q^{10} \\ 
& & {} -65498 q^9 +219032 q^8-577468 q^7 +1209533 q^6-2011958 q^5 \\ & & {} +2633017 q^4  -2650178 q^3+1957169 q^2-957460 q+235366 ). \qed
\end{eqnarray*}  

\end{thm}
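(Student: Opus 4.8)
The plan is to realise $X(k)$ as a cyclic transfer-matrix system and to read the recurrence off the eigenvalues of the transfer matrix, which is the chromatic-polynomial specialisation of the Noy--Rib\'o procedure. First I would record the structure precisely: $X(k)$ consists of a single \emph{hub} vertex together with $k$ identical \emph{gadgets} arranged cyclically, each gadget being a copy of $K_{3,3}$ with colour classes $\{l,r,m\}$ and $\{a,b,c\}$. Within each gadget the hub is joined only to $m$; the rim vertices $l$ and $r$ are joined by single edges to the neighbouring gadgets (an edge $l_i r_{i+1}$ for each $i$); and $a,b,c,m$ are internal. The structural point I will exploit is that $l$ and $r$ are \emph{not} adjacent to the hub.

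Next I would build the transfer matrix. As $X(k)$ is connected, the number of proper $q$-colourings in which the hub receives a fixed colour (say $0$) equals $P_k(q)/q$ and is independent of that colour; writing $Z_k(q)$ for this number, we have $P_k=q\,Z_k$. With the hub fixed, a single gadget with prescribed rim colours $\lambda=l$, $\rho=r$ contributes
\[
G(\lambda,\rho)=\sum_{\mu\neq 0}\bigl(q-\lvert\{\lambda,\rho,\mu\}\rvert\bigr)^{3},
\]
where $\mu$ is the colour of $m$ (which must avoid the hub colour) and each of $a,b,c$ avoids $\lambda,\rho,\mu$. Folding in the rim edge produces a $q\times q$ transfer matrix with entries $T_{\lambda\rho}=\sum_{r\neq\rho}G(\lambda,r)$, and then $Z_k=\operatorname{tr}\bigl(T(q)^{k}\bigr)$, the trace encoding the closure of the rim cycle. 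The multigraph convention for $W_1,W_2$ is exactly what makes this trace formula valid down to $k=1$.

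The heart of the argument is to diagonalise $T$ by symmetry. The group $S_{q-1}$ permuting the non-hub colours commutes with $T$, and under it $\mathbb{C}^{q}$ splits as two copies of the trivial representation --- spanned by $e_0$ and by $\sum_{j\neq 0}e_j$ --- plus one copy of the $(q-2)$-dimensional standard representation. Here the structural point matters: since the rim vertices are not adjacent to the hub, the junction colour \emph{may} equal the hub colour, so the trivial isotypic component is genuinely $2$-dimensional. (In the ordinary wheel the rim is adjacent to the hub, the junction colour $0$ is forbidden, and one gets only two eigenvalue-functions and an order-$2$ recurrence.) Thus $T$ acts as an explicit $2\times2$ block on the trivial isotypic component, giving eigenvalues $\lambda_1,\lambda_2$, and as a scalar $\lambda_3=T_{\lambda\lambda}-T_{\lambda\rho}$ (a diagonal minus an off-diagonal entry among non-hub colours) on the standard representation. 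Hence
\[
Z_k=\lambda_1^{k}+\lambda_2^{k}+(q-2)\,\lambda_3^{k},
\]
so $(P_k)$ satisfies the order-$3$ recurrence with characteristic polynomial $(x-\lambda_1)(x-\lambda_2)(x-\lambda_3)$. Writing $e_1,e_2,e_3$ for the elementary symmetric functions of the $\lambda_i$, this reads $P_{k+3}=e_1P_{k+2}-e_2P_{k+1}+e_3P_k$, and it remains to compute $\lambda_1+\lambda_2$, $\lambda_1\lambda_2$ (from the $2\times2$ block) and $\lambda_3$ as polynomials in $q$, to verify the factorisations $e_1=(q-2)A$, $-e_2=(q-2)^2B$, $e_3=(q-2)^3C$, and to obtain $P_1,P_2,P_3$ by evaluating the formula (or by direct deletion--contraction) as the initial conditions.

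The main obstacle is twofold. Conceptually, one must correctly isolate the third eigenvalue $\lambda_3$: it comes from the standard representation and is invisible to the naive ``grow a path of gadgets'' computation, which only sees the $2\times2$ symmetric block and would wrongly suggest an order-$2$ recurrence --- so the cyclic closure (the trace) is essential. Computationally, the labour is in evaluating the handful of entries of $T$ from $G$, and then checking that $e_1,e_2,e_3$ factor as the stated $(q-2)^{i}$ times $A,B,C$ rather than merely expanding them. These $(q-2)$-power factorisations are precisely the data the subsequent multiplicity argument will consume, so it is worth extracting them cleanly.
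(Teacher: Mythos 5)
Your proposal is correct, and it reaches the theorem by a genuinely different route from the paper. The paper specialises the Noy--Rib\'o machinery to the subgraph expansion $P_G(q)=\sum_{A\subseteq E}(-1)^{|A|}q^{c(V,A)}$: it deletes one rim edge, tracks five partial chromatic polynomials indexed by the partitions of the attachment vertices $\{M,L,R\}$, obtains a fixed $5\times 5$ transfer matrix $T$ with entries in $\mathbb{Q}(q)$, reads the recurrence off the denominator of the rational generating function $\boldsymbol{P}'(1)(I-zT)^{-1}\boldsymbol{v}$, and verifies it via the matrix identity $T^3-A(q)(q-2)T^2-B(q)(q-2)^2T-C(q)(q-2)^3=0$. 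You instead use the colouring (Potts-style) transfer matrix whose states are colours of the junction vertices, with $P_k=q\operatorname{tr}\bigl(T(q)^k\bigr)$, and then invoke the $S_{q-1}$ colour symmetry to split $\mathbb{C}^q$ into a $2$-dimensional trivial isotypic piece plus one copy of the standard representation. Your route buys a conceptual explanation of why the recurrence has order exactly three (two eigenvalues from the trivial block, one scalar from the standard representation) together with the closed form $P_k=q\bigl(\lambda_1^k+\lambda_2^k+(q-2)\lambda_3^k\bigr)$, neither of which the paper's method makes visible; the paper's method buys a matrix whose size is independent of $q$ and whose entries are explicit polynomials, so the derivation is finite, mechanical and machine-checkable. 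I checked that your route reproduces the paper's data: with $G(\lambda,\rho)=\sum_{\mu\neq 0}\bigl(q-\lvert\{\lambda,\rho,\mu\}\rvert\bigr)^3$ one gets $\lambda_3=-(q-2)(3q^2-15q+25)$, which already explains the factor $3q^2-15q+25$ of $C(q)$; the trace of the $2\times 2$ block plus $\lambda_3$ equals $(q-2)A(q)$ exactly; and $q\operatorname{tr}(T)=q(q-1)(q-2)(q^4-8q^3+26q^2-38q+19)$, which is precisely the stated $P_1$. One point to tighten in a final write-up: your matrix is $q\times q$, so it is literally defined only for positive integer $q$; since the reduced quantities (the entries of the $2\times 2$ block and $\lambda_3$) are polynomials in $q$, the recurrence holds for infinitely many integer values of $q$ and hence as an identity of polynomials --- or, equivalently, work from the outset with the $3\times 3$ matrix on the symmetry-reduced ``pattern'' basis so that $q$ never appears as a dimension.
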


\begin{cor}\label{cor:recur}
For all $k\geqslant 1$, the chromatic polynomial $P_{X(k)}$ has a factor of $(q-2)^k$. For all $k \geqslant 3$, the graph $X(k)$ is $3$-connected and has no complete cutset.
\end{cor}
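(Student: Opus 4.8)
The divisibility claim is an immediate strong induction on $k$ using \cref{thm:recur}. The base cases $k\in\{1,2,3\}$ are read off directly: the stated factorizations of $P_1,P_2,P_3$ carry the factors $(q-2)$, $(q-2)^2$ and $(q-2)^3$ respectively. For the inductive step, assume $(q-2)^{k+j}\mid P_{k+j}$ for $j=0,1,2$. Each term on the right-hand side of the recurrence then picks up at least $(q-2)^{k+3}$: the first contributes $(q-2)\cdot(q-2)^{k+2}$, the second $(q-2)^2\cdot(q-2)^{k+1}$, and the third $(q-2)^3\cdot(q-2)^k$. Hence $(q-2)^{k+3}\mid P_{k+3}$, closing the induction, and this part is routine.

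For the structural claims I would first fix notation for $X(k)$: a hub $c$ adjacent to $m_1,\dots,m_k$, together with copies $Q_1,\dots,Q_k$ of $K_{3,3}$, where $Q_i$ has bipartition classes $\{l_i,m_i,r_i\}$ and $\{a_i,b_i,c_i\}$, and the rim edges join $r_i$ to $l_{i+1}$ (indices mod $k$). The only edges leaving $Q_i$ are the spoke $c m_i$ and the two rim edges at $l_i$ and $r_i$, so $\{l_i,m_i,r_i\}$ are the portals of $Q_i$, while $a_i,b_i,c_i$ are internal and each have degree exactly $3$. A minimum-degree check then gives the necessary $\delta(X(k))\geq 3$.

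To establish $3$-connectivity I would show that no pair $\{x,y\}$ is a cut, organised by how $\{x,y\}$ meets the hub and the gadgets, and resting on two facts: $K_{3,3}$ is itself $3$-connected, so deleting at most two vertices from one gadget leaves its remainder connected; and contracting each gadget to a point yields the wheel $W_k$, which is $3$-connected for $k\geq 3$. If both deleted vertices lie in a single gadget $Q_i$, at least one of its three portals survives and reattaches the (connected) remainder of $Q_i$ to the otherwise intact rest of the graph. If the hub is one of the deleted vertices, the gadgets still form a connected ring through the rim edges. If $x$ and $y$ lie in distinct gadgets with the hub present, every surviving vertex reaches the hub, either directly via its $m$-portal or, when that portal is the deleted vertex, via a rim link to a neighbouring gadget. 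The one place $k\geq 3$ is essential is exactly here: each gadget has two \emph{distinct} rim-neighbours, so the single other deletion can block at most one rim link and the other remains available.

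Finally, for the absence of a complete cutset I would use that $X(k)$ is $3$-connected, so every cutset has at least three vertices; a complete (clique) cutset would therefore contain a triangle, and it suffices to show $X(k)$ is triangle-free. This is a short local check: each $K_{3,3}$ is bipartite, the hub's neighbours $\{m_i\}$ are pairwise non-adjacent, and each rim edge $r_i l_{i+1}$ joins two vertices whose remaining neighbourhoods lie in disjoint gadgets (one verifies the index arithmetic does not collide, which holds for $k\geq 3$). I expect the main obstacle to be organising the $3$-connectivity case analysis cleanly and pinning down that $k\geq 3$ is precisely what excludes the degenerate rim collisions; the divisibility and the triangle-freeness are comparatively mechanical.
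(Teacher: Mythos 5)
Your divisibility argument coincides with the paper's proof of \cref{cor:recur}: the same induction on $k$, with the base cases read off from the factorizations of $P_1,P_2,P_3$ in \cref{thm:recur}, and the same term-by-term count of factors of $(q-2)$ in the recurrence. Where you genuinely diverge is on the structural claims, because the paper does not prove them at all --- it states that $3$-connectivity and the absence of a complete cutset are ``easier for the reader to verify directly by examining \cref{fig:x1x2}'' and explicitly omits the case analysis. You instead supply an actual argument, and it is sound: the case split by the location of the two deleted vertices (both in one gadget; hub plus a gadget vertex; two distinct gadgets), resting on the $3$-connectivity of $K_{3,3}$ and on $k\geq 3$ guaranteeing that each gadget has two distinct rim-neighbours, does establish $3$-connectivity; and your reduction of ``no complete cutset'' to triangle-freeness (in a $3$-connected graph every cutset has at least three vertices, so a complete cutset would contain a triangle) is a clean way to avoid exactly the tedium the paper sidesteps, since triangle-freeness follows from the bipartiteness of each gadget, the independence of the hub's neighbourhood $\{m_i\}$, and the disjointness of the neighbourhoods across a rim edge for $k\geq 2$. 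One small point needs polish in the two-gadget case: when $m_i$ is the deleted vertex of $Q_i$, you should route the remainder of $Q_i$ specifically to the rim-neighbour that does \emph{not} contain the other deleted vertex --- that neighbour is then intact and reaches the hub directly --- because if you merely reach ``a neighbouring gadget'' you have not finished in the situation where both deleted vertices are $m$-portals of adjacent gadgets and a second hop is required. As written, ``the other remains available'' glosses over this, but the fix is immediate and the overall structure of your proof stands; in effect you have filled in a verification the paper leaves to the reader.
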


\begin{proof} (Assuming \cref{thm:recur})
We prove this by induction on $k$. For the base cases, note that $P_1(q)$, $P_2(q)$ and $P_3(q)$ have chromatic polynomials with factors of $(q-2)$, $(q-2)^2$ and $(q-2)^3$ respectively. 

Now assume that $k>3$ and consider the expression for $P_k(q)$ given by the linear recurrence shown in \cref{thm:recur}. Each term in the expression has the form 
\[
Q(q) (q-2)^i P_{k-i}(q)
\]
where $i \in \{1,2,3\}$ and $Q(q)$ is one of $A(q)$, $B(q)$ and $C(q)$ respectively. By the inductive hypothesis $(q-2)^{k-i}$ is a factor of $P_{k-i}$ and so $(q-2)^k$ is a factor of each of the three terms of the sum, and so of the sum itself.

The claims about connectivity and the absence of complete cutsets are easier for the reader to verify directly by examining \cref{fig:x1x2} rather than trying to follow a tedious case analysis, which we therefore omit.
\end{proof}

The remainder of this section is devoted to the proof of \cref{thm:recur} and the details of how to derive the linear recurrence.  The members of any recursively constructible family of graphs $\{G_n\}_{n \geqslant 1}$ have a \emph{repeating structure} in that each member of the family, say $G_{k+1}$, is obtained from the previous one, $G_k$, by adding (in a sense that is made precise by Noy and Rib\'o \cite{MR2037635}) an additional copy of a fixed subgraph, which is attached to $G_k$ at certain ``vertices of attachment''. By keeping auxiliary information about the interaction between the terms of the chromatic polynomial of $G_k$ and its vertices of attachment, it is possible to determine the chromatic polynomial of $G_{k+1}$ by ``incrementally updating'' all the information pertaining to the chromatic polynomial of $G_k$. As we will see below, for a recursively constructible family of graphs, this updating takes the form of matrix multiplication by a fixed ``update matrix'' (which is usually called the \emph{transfer matrix}). Given this relatively simple relationship between $P_{X(k)}$ and $P_{X(k+1)}$ it is then possible to derive a number of recursive and/or non-recursive expressions for the chromatic polynomial of an arbitrary graph in the family. In particular, it is possible to determine the coefficients (that is, the polynomials $\alpha_1$, $\alpha_2$, etc.) of the linear recurrence satisfied by the sequence of chromatic polynomials, and it is this that we will need.


Let $X'(k)$ denote the graph obtained from $X(k)$ by deleting one rim edge connecting distinct copies of $K_{3,3}$ (as in the first diagram of \cref{fig:5to6spoke}), and let $M$, $L$, $R$ denote the middle vertex and the left- and right-hand endpoints of the deleted edge, respectively. Then $X'(k+1)$ can be obtained by adding an additional copy of $K_{3,3}$ to the graph, and connecting it to the vertices $M$ and $R$ as shown in the second diagram in \cref{fig:5to6spoke}. In $X(k+1)$ the vertices $\{M,L,R'\}$ have the same relative positions as did $\{M,L,R\}$ in $X(k)$. Renaming $R'$ back to $R$, the ``add a $K_{3,3}$-subgraph'' step can be repeated arbitrarily many times, constructing a graph with as many copies, say $\ell$, of $K_{3,3}$ as desired.  Finally, adding the edge $LR$ to $X'(\ell)$ yields $X(\ell)$.  Our goal in the remainder of this section is to show that we can keep track of the chromatic polynomial through all of these construction steps, arriving at a suitable expression for symbolic computation and/or theoretical analysis. 

\begin{figure}
\begin{center}
\begin{tikzpicture}[rotate=15,scale=1.1]
\tikzstyle{vertex}=[circle,fill=lightgray,draw=black ,inner sep = 0.65mm]

\node [vertex] (c) at (0,0) {};
\foreach \x in {0,2,3,4,5} {
\node [vertex] (l\x) at (60*\x:2.5cm) {};
\node [vertex] (m\x) at (15+60*\x:1.25cm) {};
\node [vertex] (r\x) at (30+60*\x:2.5cm) {};
\node [vertex] (a\x) at ($(l\x)!0.5!(r\x)$) {};
\node [vertex] (b\x) at ($(l\x)!0.5!(m\x)$) {};
\node [vertex] (c\x) at ($(r\x)!0.5!(m\x)$) {};

\draw (l\x)--(a\x)--(r\x);
\draw (l\x)--(b\x)--(m\x);
\draw (r\x)--(c\x)--(m\x);

\draw (a\x)--(m\x);
\draw (b\x)--(r\x);
\draw (c\x)--(l\x);

\draw (c)--(m\x);
}

\foreach \x/\y in {2/3,3/4,4/5,5/0} {
\draw (l\y) -- (r\x);
}


\filldraw [thick, black, fill = white] (l2) circle (0.1cm);
\filldraw [thick, black, fill = white] (r0) circle (0.1cm);
\filldraw [thick, black, fill = white] (c) circle (0.1cm);

\node [above left] at (l2) {$L$};
\node [above right] at (r0) {$R$};
\node at (90:0.5cm) {$M$};

\end{tikzpicture}
\hspace{1.5cm}
\begin{tikzpicture}[rotate=15,scale=1.1]
\tikzstyle{vertex}=[circle,fill=lightgray,draw=black ,inner sep = 0.65mm]

\node [vertex] (c) at (0,0) {};
\foreach \x in {0,1,2,3,4,5} {
\node [vertex] (l\x) at (60*\x:2.5cm) {};
\node [vertex] (m\x) at (15+60*\x:1.25cm) {};
\node [vertex] (r\x) at (30+60*\x:2.5cm) {};
\node [vertex] (a\x) at ($(l\x)!0.5!(r\x)$) {};
\node [vertex] (b\x) at ($(l\x)!0.5!(m\x)$) {};
\node [vertex] (c\x) at ($(r\x)!0.5!(m\x)$) {};

\draw (l\x)--(a\x)--(r\x);
\draw (l\x)--(b\x)--(m\x);
\draw (r\x)--(c\x)--(m\x);

\draw (a\x)--(m\x);
\draw (b\x)--(r\x);
\draw (c\x)--(l\x);

\draw (c)--(m\x);
}

\foreach \x/\y in {0/1,2/3,3/4,4/5,5/0} {
 \draw (l\y) -- (r\x);
}

\filldraw [thick, black, fill = white] (l2) circle (0.1cm);
\filldraw [thick, black, fill = white] (r1) circle (0.1cm);
\filldraw [thick, black, fill = white] (c) circle (0.1cm);

\node [above left] at (l2) {$L$};
\node [above right] at (r0) {$R$};
\node at (105:0.5cm) {$M$};

\node [above right] at (r1) {$R'$};

\draw [thick, dashed] (75:2cm) circle (1.1cm);

\end{tikzpicture}
\end{center}
\caption{From $X'(5)$ to $X'(6)$}
\label{fig:5to6spoke}
\end{figure}
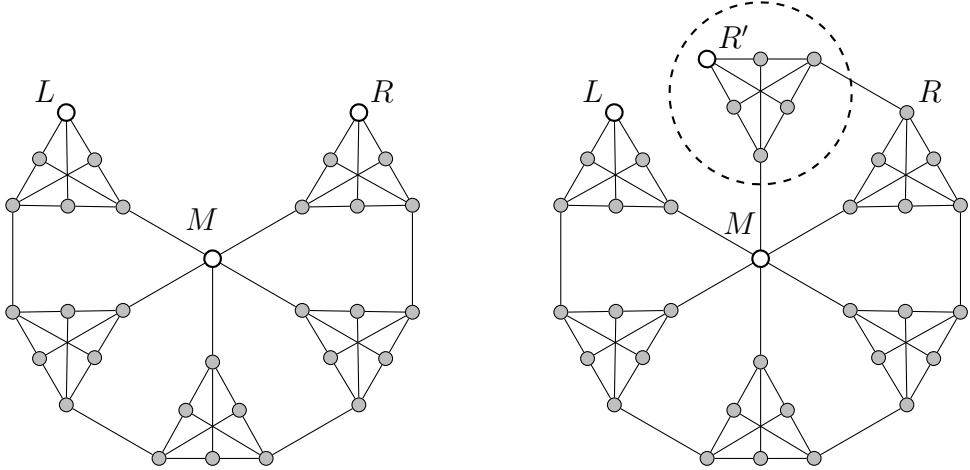

To accomplish this, we use the expansion of the chromatic polynomial of a 
graph $G = (V,E)$ as a sum over all edge-subsets, namely
\begin{equation}\label{chrompol}
P_G(q) = \sum_{A \subseteq E} (-1)^{|A|} q^{c(V,A)},
\end{equation}
where $c(V,A)$ is the number of connected components of the graph with vertex set $V$ and edge set $A$

Any subset $A$ of the edges of $X'(k)$ determines a unique partition $\pi$ of the three special vertices $\{M,L,R\}$, where two vertices are in the same cell of $\pi$ if they are in the same component of the graph $(V,A)$. Thus the chromatic polynomial of $X'(k)$ can be expressed as a sum of five ``partial'' chromatic polynomials, one for each partition of $\{M,L,R\}$. (In fact, this is just the obvious generalization of the partial chromatic polynomials for the $2$-terminal series-parallel graphs of the previous section.) Each of these five summands is obtained by restricting the chromatic polynomial summation of \cref{chrompol} to the edge-subsets inducing that particular partition. 
So if $P'$ denotes the chromatic polynomial of $X'(k)$, then 
\begin{equation}\label{rkpol}
P' = P'_{\{\mathit{MLR}\}} + P'_{\{M|\mathit{LR}\}}+P'_{\{\mathit{ML}|R\}} + P'_{\{\mathit{MR}|L\}}   + P'_{\{M|L|R\}}.
\end{equation}
It is this expression of the chromatic polynomial of $X'(k)$ as a sum over partitions that comprises the auxiliary information that is maintained at each stage. (Although $L$ only appears in the final edge-addition, the auxiliary information is needed at that point and so must be maintained throughout the procedure.)

Next we consider the contribution made by each edge subset $A \subseteq E(X'(k+1))$ to the chromatic polynomial of $X'(k+1)$. Following Noy and Rib\'o \cite{MR2037635}, we set $A = B \cup C$ where $B = A \cap E(X'(k))$ and $C = A \backslash E(X'(k))$. It follows that $B \cap C = \emptyset$ and so each set of edges is the disjoint union of ``old edges'' and ``newly-added'' edges. The contribution of $A$ to the chromatic polynomial of 
$X'(k+1)$ is obtained by multiplying the contribution of $B$ (to the chromatic polynomial of $X'(k)$) by a multiplier of the form $\pm\, q^a$ for some integer $a$. The key to the entire transfer matrix method is that while the multiplier depends on $C$, and on the partition of $\{M,L,R\}$ induced by $B$, it \emph{does not depend} on $B$ itself. Moreover, the partition induced by $A$ on $\{M,L,R'\}$ depends on $C$, and on the partition of $\{M,L,R\}$ induced by $B$, but again it does not depend on $B$ itself.  

This concept is of sufficient importance that we illustrate it with a particular example. \cref{specialC} shows a particular $6$-edge subset of the new edges. Suppose then that $B$ is a subset of the old edges about which we know nothing other than it contains a path connecting $L$ to $M$, but not from either of those two vertices to $R$ (this is shown schematically as squiggly lines passing from $L$ to $M$ in the rest of the graph). In other words, it induces the partition $\pi = \{\mathit{ML}|R\}$ on $\{M,L,R\}$. Forming $A = B \cup C$ by the addition of $C$ results in six additional edges, just one additional connected component (the isolated vertex in the $K_{3,3}$), and the resulting set $A$ contains paths between any two vertices in $\{L,M,R'\}$. The total contribution to the chromatic polynomial of $X'(k)$ from all of the edge-subsets of $E(X'(k))$ that induce the partition $\pi$ is $P'_{\{\mathit{ML}|R\}}$. Augmenting each of these edge-subsets by $C$ multiplies each of the contributions by $(-1)^6 q = q$, and so $q P'_{\{\mathit{ML}|R\}}$ is the total contribution from all such edge-subsets. In each case, the partition induced on $\{M,L,R'\}$ by the edge-subset augmented by $C$ is the same, namely $\{MLR'\}$, and so the resulting contribution can be allocated in its entirety to the relevant partial chromatic polynomial for $X'(k+1)$. 

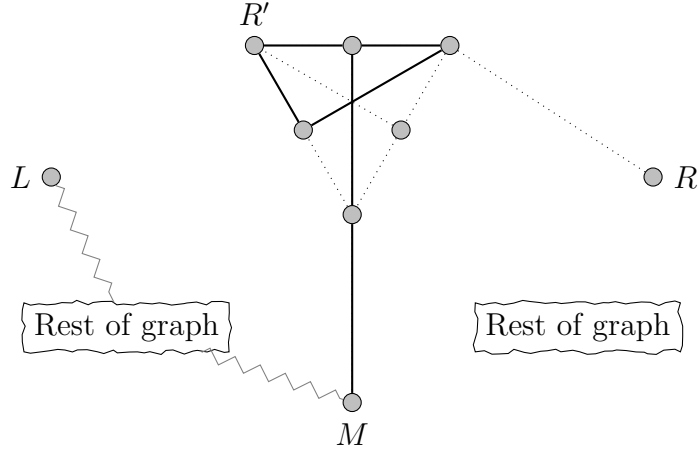
\begin{figure}
\begin{center}
\begin{tikzpicture}
\tikzstyle{vertex}=[circle,fill=lightgray,draw=black ,inner sep = 0.85mm]
\node [vertex] (v0) at (270:1.5cm) {};
\node [vertex] (v1) at (30:1.5cm) {};
\node [vertex,label=90:{$R'$}] (v2) at (150:1.5cm) {};
\node [vertex, label=270:{$M$}] (vM) at (0,-4) {};
\node [vertex,label=180:{$L$}] (vL) at (-4,-1) {};
\node [vertex,label=0:{$R$}] (vR) at (4,-1) {};
\node [vertex] (a2) at ($(v0)!0.5!(v1)$) {};
\node [vertex] (a0) at ($(v1)!0.5!(v2)$) {};
\node [vertex] (a1) at ($(v2)!0.5!(v0)$) {};
\draw [dotted] (v0)--(a1)--(v2)--(a0)--(v1)--(a2)--(v0);
\draw [dotted] (a0)--(v0);
\draw [dotted] (a1)--(v1);
\draw [dotted] (a2)--(v2);
\draw [dotted] (v0)--(vM);
\draw [dotted] (v1)--(vR);
\node [draw, decorate, decoration={random steps,segment length=3pt,amplitude=1pt}] (rog) at (-3,-3)
{Rest of graph};
\node [draw, decorate, decoration={random steps,segment length=3pt,amplitude=1pt}] (rog2) at (3,-3)
{Rest of graph};
\draw [gray, decorate, decoration=zigzag](vL)--(rog);
\draw [gray, decorate, decoration=zigzag](rog)--(vM);
\draw [thick] (v0)--(vM);
\draw [thick] (v2)--(a1);
\draw [thick] (a1)--(v1);
\draw [thick] (v1)--(a0);
\draw [thick] (a0)--(v2);
\draw [thick] (a0)--(v0);

\end{tikzpicture}
\end{center}
\caption{An edge set contributing to the $\left( \{\mathit{ML}|R\},\{\mathit{MLR}\}\right)$-entry of $T$}
\label{specialC}
\end{figure}

The transfer matrix $T$ is obtained by considering all possible subsets of the new edges, and accumulates the multipliers associated with each subset. It has rows indexed by the five partitions of $\{M,L,R\}$ and columns indexed by the five partitions of $\{M,L,R'\}$; both use the same ordering of partitions as in \cref{rkpol}.
Its entries are determined as follows: for each subset $C$ of the new edges, and each possible partition $\pi$ of $\{M,L,R\}$, we compute the multiplier $\pm\, q^a$ and the partition $\pi'$ of $\{M,L,R'\}$ determined by $C$ and $\pi$. This multiplier is added to the $(\pi,\pi')$-entry of $T$.  There are just $2^{11}$ choices for $C$ and five choices for $\pi$, and the only computation required for each combination is to count connected components and decide which vertices are in the same component in a $9$-vertex graph. Therefore it is straightforward to determine the transfer matrix, which is given by:

\[
T = 
\left[\begin{array}{ccccc}
a_1 & 0 & 0 & a_2 & 0 \\
a_3 & a_4 & a_4 & a_5 & a_6 \\
0 & 0 & a_1 & 0 & a_2 \\
a_7 & 0 & 0 & a_8 & 0 \\
0 & 0 & a_7 & 0 & a_8
\end{array}\right]
\]
where 
\begin{align*}
a_1 &= -6 \, q^{3} + 39 \, q^{2} - 89 \, q + 69 ,\\
a_2 &=q^{6} - 11 \, q^{5} + 55 \, q^{4} - 147 \, q^{3} + 204 \, q^{2} - 115 \, q ,\\
a_3 &=-3 \, q - \frac{31}{q} + 21 ,\\
a_4 &=-3 \, q^{3} + 21 \, q^{2} - 55 \, q + 50 ,\\
a_5 &=3 \, q^{2} - 15 \, q + 19 ,\\
a_6 &=q^{6} - 11 \, q^{5} + 55 \, q^{4} - 150 \, q^{3} + 219 \, q^{2} - 134 \, q ,\\
a_7 &=-3 \, q^{3} + 21 \, q^{2} - 58 \, q - \frac{31}{q} + 71 ,\\
a_8 &=q^{6} - 11 \, q^{5} + 55 \, q^{4} - 153 \, q^{3} + 243 \, q^{2} - 204 \, q + 69.
\end{align*}

If we let $\boldsymbol{P}'(k)$ be the row vector of length $5$, whose entries are the five partial chromatic polynomials of $X(k)$ as in \cref{rkpol}, then 
\[
\boldsymbol{P}'(k+1) = \boldsymbol{P}'(k) T.
\] 
This follows
because the matrix multiplication encapsulates the bookkeeping necessary to multiply each of the partial chromatic polynomials of $X'(k)$ by its appropriate multiplier, and accumulate the results in the correct partial chromatic polynomials of $X'(k+1)$. 

Two things remain before we can extract a complete expression for the chromatic polynomial of $X(k)$, namely a base case for the recursion, and an adjustment to account for the final step where the edge $LR$ is added to ``close the circuit'' and construct $X(k)$ from $X'(k)$.

For the base case, we need the vector $\boldsymbol{P}'(1)$ which is the chromatic polynomial of $X'(1)$ (the first graph of \cref{fig:x1x2} with the curved edge deleted) with all the terms separated into the five partial chromatic polynomials as before. As $X'(1)$ has only ten edges, a straightforward computation over the $2^{10}$ edge-subsets yields:
\[
{{\boldsymbol{P'}(1)}}^T = 
\left[
\begin{array}{r}
3 \, q^{3} - 21 \, q^{2} + 31 \, q\\
3 \, q^{5} - 21 \, q^{4} + 55 \, q^{3} - 50 \, q^{2}\\
-3 \, q^{4} + 15 \, q^{3} - 19 \, q^{2}\\
-3 \, q^{4} + 15 \, q^{3} - 19 \, q^{2}\\
q^{7} - 10 \, q^{6} + 42 \, q^{5} - 84 \, q^{4} + 65 \, q^{3}\\
\end{array}
\right].
\]
(Note that ${\boldsymbol{P'}(1)}$ is a row-vector, but for typographical reasons we show its transpose.)  If $\boldsymbol{j}$ is the all-ones (column) vector of length $5$, then we now have
\[
P_{X'(k)} = \boldsymbol{P'}(1)\, T^{k-1} \, \boldsymbol{j},
\]
because post-multiplying by $\boldsymbol{j}$ simply adds up the five partial chromatic polynomials in $\boldsymbol{P'}(k)$.

Now we consider adding the final edge $LR$, thereby constructing $X(k)$ from $X'(k)$. Let $B \subseteq E(X'(k))$, and then let $C = B \cup \{LR\}$. If $B$ contains edges connecting $L$ to $R$, then $c(V,B)  = c(V,C)$, otherwise $c(V,C) = c(V,B) -1$. In the first case the contributions of $B$ and $C$ to the summation cancel each other out because $|C| = |B|+1$. In the second case, $C$ contributes $-1/q$ times the contribution of $B$. Thus in total, the chromatic polynomial of $X(k)$ is the sum of the partial chromatic polynomials of $X'(k)$ associated with the partitions where $L$ and $R$ are in different cells, all multiplied by $1-1/q$.
So if we put
\[
\boldsymbol{v}^T = (1-1/q) \cdot \left( 0, 0, 1, 1, 1 \right)
\]
then 
\[
P_{X(k)} = \boldsymbol{P'}(1)\, T^{k-1} \, \boldsymbol{v}.
\]

Still following Noy and Rib\'o \cite{MR2037635}, the linear recurrence satisfied by the sequence $P_{X(k)}$ can be extracted by considering the \emph{generating function} 
\[
\sum_{k=1} P_{X(k)} z^k = \boldsymbol{P}'(1) (I - z T)^{-1} \boldsymbol{v}
\]
and then expressing this as a rational function; the denominator of this rational function is a cubic polynomial in $z$, and its coefficients are the coefficients of the linear recurrence satisfied by the sequence of chromatic polynomials.

Having obtained the linear recurrence, it is then possible to verify  that it is correct. This follows because it is easy to check that  
\[
 T^3 - A(q) (q-2) T^2 - B(q) (q-2)^2 T - C(q) (q-2)^3  = 0
\]
and pre-multiplying this expression by $\boldsymbol{P}'(1)T^{k-1}$, post-multiplying it by $\boldsymbol{v}$ and then expanding out leaves the desired recurrence.

This concludes the derivation of the linear recurrence, completing the proof of \cref{thm:recur}.

\bibliographystyle{plain}
\bibliography{sample.bib}


\appendix

\section{Code}

This Appendix contains code in {\tt SageMath} (which is a mild variation on Python) that will enable the reader to rapidly construct any member of the family of graphs $\{X(k)\}$, and verify the correctness of the claimed chromatic polynomials. 

{\tt SageMath} (\url{sagemath.org}) is a free open-source computer algebra package that is readily available, either by downloading and installing the software on a local machine, or by using a web-browser to access a free cloud-based interface.

There are three {\tt SageMath} functions, firstly {\tt Xkgraph(k)} which constructs the graph $X(k)$, then {\tt nextPol(pA,pB,pC)} which takes as arguments the polynomials $P_{k-1}$, $P_{k-2}$ and $P_{k-3}$ and returns $P_k$. 
These functions are used by teh hel

These functions are called by {\tt cpols(k)} which returns an array containing the chromatic polynomials of the first $k$ graphs in the sequence. This is only function that needs to be called by the user.

\begin{minted}{python}
def Xkgraph(k):
    nv = 6*k+1
    g = Graph(nv)
    for x in range(k):
        for i in range(3):
            for j in range(3):
                g.add_edge(6*x+1+i,6*x+4+j)
        g.add_edge(0,6*x+1)
        g.add_edge(6*x+3,6*((x+1)%k)+2)       
    return g
\end{minted}

\begin{minted}{python}
def nextPol(pA, pB, pC):
    A=q^5-9*q^4+37*q^3-88*q^2+127*q-94
    B=6*q^7-81*q^6+497*q^5-1781*q^4+
    	4026*q^3-5780*q^2+4968*q-2024
    C=(q-1)^2*(3*q^2-15*q+25)*(3*q^5-33*q^4+
    	157*q^3-399*q^2+535*q-299)
    ppol = A*(q-2)*pA+B*(q-2)^2*pB +C*(q-2)^3*pC
    return ppol.factor()
\end{minted}

\begin{minted}{python}
def cpols(k):
    q = var('q')
    gs = [Xkgraph(i) for i in range(1,4)]
    cps = [g.chromatic_polynomial().subs(x=q) for g in gs]

    for i in range(3,k):
        cps.append(nextPol(cps[i-1],cps[i-2],cps[i-3]))
        
    return cps
\end{minted}

\end{document}